\providecommand{\tabularnewline}{\\}
\newenvironment{cellvarwidth}[1][t]
    {\begin{varwidth}[#1]{\linewidth}}
    {\@finalstrut\@arstrutbox\end{varwidth}}
\theoremstyle{plain}
\newtheorem{thm}{\protect\theoremname}[section]
\theoremstyle{definition}
\newtheorem{example}[thm]{\protect\examplename}
\theoremstyle{remark}
\newtheorem{rem}[thm]{\protect\remarkname}
\newtheorem{assumption}{Assumption}
\providecommand{\examplename}{Example}
\providecommand{\remarkname}{Remark}
\providecommand{\theoremname}{Theorem}
\begin{document}
\title{Precise quantile function estimation from the characteristic function}
\author{Gero Junike\thanks{Corresponding author. Carl von Ossietzky Universität, Institut für
Mathematik, 26129 Oldenburg, Germany, ORCID: 0000-0001-8686-2661,
Phone: +49 441 798-3729, E-mail: gero.junike@uol.de\\ \textcolor{blue}{This article is accepted in the journal Statistics
\& Probability Letters.}}}
\maketitle
\begin{abstract}
We provide theoretical error bounds for the accurate numerical computation
of the quantile function given the characteristic function of a continuous
random variable. We show theoretically and empirically that the numerical
error of the quantile function is typically several orders of magnitude
larger than the numerical error of the cumulative distribution function
for probabilities close to zero or one. We introduce the COS method
for computing the quantile function. This method converges exponentially
when the density is smooth and has semi-heavy tails and all parameters
necessary to tune the COS method are given explicitly. Finally, we
numerically test our theoretical results on the normal-inverse Gaussian
and the tempered stable distributions.\\
\textbf{Keywords:} Quantile function, numerical inversion, characteristic
function\\
\textbf{Mathematics Subject Classification:} 65T40, 62-08, 60E10
\end{abstract}

\section{\protect\label{sec:Introduction}Introduction}

Let $(\Omega,\mathcal{F},P)$ be a probability space and $X:\Omega\to\mathbb{R}$
be a random variable with density $f$, cumulative distribution function
(CDF) $F$, quantile function (QF) $F^{-1}$ and characteristic function
(CF) $\varphi$. In this research, we are interested in the precise
numerical computation of the QF, provided that the CF is given in
closed form. QFs are used, for example, in Monte Carlo simulations
to generate random numbers using the fact that $X$ and $F^{-1}(U)$
are equal in distribution where $U$ is uniformly distributed. For
the tempered stable (TS) distribution, for example, ``neither the
density function nor specific random number generators are available'',
see \citet[Sec. 8.4.3, p. 111]{schoutens2003levy}. Therefore, a precise
approximation of the QF of the TS distribution is useful for random
number generation. Applications of the QF in statistics are discussed
in \citet{gilchrist2000statistical}.

Typically, one first calculates $F$ using the Gil-Pelaez formula
and then inverts $F$ numerically. The Gil-Pelaez formula is stated,
for example, in \citet[Corollary 2]{hughett1998error} and \citet[Eq. (3.6)]{abate1992fourier}
and reads
\begin{equation}
F(y)=\frac{1}{2}-\int_{-\infty}^{\infty}\frac{\varphi(u)}{2i\pi u}e^{-iuy}du,\quad y\in\mathbb{R},\quad\text{and}\quad F(y)=\frac{2}{\pi}\int_{0}^{\infty}\Re\{\varphi(u)\}\frac{\sin(yu)}{u}du,\quad y\geq0\label{eq:Gil}
\end{equation}
for CDFs with full support and support on the positive reals, respectively.
The integrals in (\ref{eq:Gil}) must be solved numerically. In this
research we use the COS method, which is introduced in Section \ref{sec:Theory},
to approximate $F$ from $\varphi$ since all parameters necessary
to tune the COS method can be obtained directly from $\varphi$, and
the COS method converges exponentially, provided $f$ is smooth and
has semi-heavy tails. Furthermore, the COS method makes it possible
to approximate $F$ and $f$ simultaneously, which will be helpful
in estimating the error on the QF. When $f$ has heavy tails, e.g.,
when $f$ belongs to the stable law, other Fourier inversion methods
-- such as the Gil-Pelaez formula or the Carr-Madan formula (see
\citet{carr1999option}) -- are numerically more efficient, see \citet{junike2023handle}.
A robust version of the COS method for unbounded functions can be
found in \citet{wang2017pricing}. A direct link between $\varphi$
and $F^{-1}$ via non-linear integro-differential equations is given
in \citet{shaw2009monte}. Suppose $H$ is a numerical approximation
of $F$, in the sense that
\begin{equation}
\sup_{y\in\mathbb{R}}|F(y)-H(y)|\leq\varepsilon,\label{eq:F-H}
\end{equation}
given some predefined error tolerance $\varepsilon>0$. Depending
on the exact Fourier technique and the numerical integration method,
$H$ depends on parameters such as truncation range, number of terms,
step size and so on. In the case of the Gil-Pelaez formula and the
COS method, bounds on these parameters are given explicitly, see \citet{abate1992fourier}
and \citet{junike2023handle}. That is, given $\varepsilon$, it is
well known how to construct $H$ such that Inequality (\ref{eq:F-H})
holds. Let $p\in(0,1)$ and $\delta>0$. To the best of our knowledge,
however, it is not known how to choose $\varepsilon$ such that Inequality
(\ref{eq:F-H}) implies 
\begin{equation}
|F^{-1}(p)-H_{\text{Num}}^{-1}(p)|<\delta,\label{eq:F^-1}
\end{equation}
where $H^{-1}$ is the (exact) inverse of $H$ and $H_{\text{Num}}^{-1}$
is the approximation of $H^{-1}$ by a numerical inversion technique.
In our main Theorem \ref{thm:HH-1} we show that the error between
$F^{-1}$ and $H_{\text{Num}}^{-1}$ depends linearly on the error
between $H^{-1}$ and $H_{\text{Num}}^{-1}$, linearly on the error
between $F$ and $H$ and reciprocally on the derivative $h:=H^{\prime}$.
In particular, in the tails, $h$ tends to zero, i.e., we show theoretically
that for $p$ close to zero or close to one, $\varepsilon$ must be
several orders of magnitude smaller than $\delta$ to ensure that
Inequality (\ref{eq:F-H}) implies Inequality (\ref{eq:F^-1}). 

We confirm the theoretical results by numerical experiments on distributions
with support on $(-\infty,\infty)$ and $(0,\infty)$. In particular,
we consider the normal distribution, the normal-inverse Gaussian (NIG)
distribution (see \citet{barndorff1978hyperbolic,barndorff1997normal})
and the TS distribution (see \citet{hougaard1986survival}). 

The CDF and the QF of the NIG and the TS distributions are not known
in closed form. The density of the NIG distribution can be expressed
in terms of the modified Bessel function of the third kind, but for
the density of the TS distribution only an infinite series representation
is known. 

This letter is organized as follows: in Section \ref{sec:Theory}
we discuss our main results: how the numerical error on the CDF propagates
to the QF and we introduce the COS method. In Section \ref{sec:Numerical-experiments}
we present numerical experiments confirming the theoretical results.

\section{\protect\label{sec:Theory}The COS method and QF by CF}

Let $F$ be a differentiable CDF and $H:\mathbb{R}\to\mathbb{R}$
be a differentiable function, which serves as an approximation of
$F$. Let $f=F^{\prime}$ and $h=H^{\prime}$. We make the following
assumptions:\begin{assumption}\label{A1} 	
There is an interval $(\alpha,\beta)\subset\overline{\mathbb{R}}$ such that $f(x)>0$
if and only if $x\in(\alpha,\beta)$.
\end{assumption}\begin{assumption}\label{A2} 	
There is an interval $(a,b)\subset\overline{\mathbb{R}}$ such that $h(x)>0$ if and only if $x\in(a,b)$.
\end{assumption}These intervals can be chosen to be open, since densities are only
almost surely unique. We then have that $F$ and $H$ are bijective
maps from $(\alpha,\beta)$ to $(0,1)$ and $(a,b)$ to $(0,1)$,
respectively. The inverse function of $F$ and $H$ are denoted by
$F^{-1}$ and $H^{-1}$. We introduce another error by numerically
inverting $H$. We denote the approximation of $H^{-1}$ by $H_{\text{Num}}^{-1}$.
The next theorem explains how the bias between $F$ and $H$ and $H^{-1}$
and $H_{\text{Num}}^{-1}$ propagates when $F^{-1}$ is approximated
by $H_{\text{Num}}^{-1}$.
\begin{thm}
\label{thm:QF}Assume Assumptions \ref{A1} and \ref{A2} hold. Let
$p\in(0,1)$ and $\varepsilon>0$ with $0<p\pm\varepsilon<1$. Assume
$\sup_{y\in\mathbb{R}}|F(y)-H(y)|\leq\varepsilon$ and $|H^{-1}(p)-H_{\text{Num}}^{-1}(p)|\leq\varepsilon$.
Let $y=H_{\text{Num}}^{-1}(p)$. Then it holds for some $c\in[-\varepsilon,\varepsilon]$
that $h(y+c)>0$ and
\begin{equation}
|F^{-1}(p)-H_{\text{Num}}^{-1}(p)|\leq\frac{2\varepsilon}{h(y+c)}+\varepsilon+o(\varepsilon).\label{eq:H-1}
\end{equation}
\end{thm}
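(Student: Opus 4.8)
The plan is to split the total error with the triangle inequality,
\[
|F^{-1}(p)-H_{\text{Num}}^{-1}(p)|\leq|F^{-1}(p)-H^{-1}(p)|+|H^{-1}(p)-H_{\text{Num}}^{-1}(p)|,
\]
where the second summand is already bounded by $\varepsilon$ by hypothesis and will become the ``$+\varepsilon$'' in (\ref{eq:H-1}). Everything then reduces to estimating $|F^{-1}(p)-H^{-1}(p)|$, and I expect the factor $2$ together with the reciprocal dependence on $h$ to come from trapping \emph{both} quantile points inside one short level interval of $H$.

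First I would locate $F^{-1}(p)$ relative to $H$. Writing $u=F^{-1}(p)$, the hypothesis $\sup_{y}|F(y)-H(y)|\le\varepsilon$ gives $|H(u)-p|=|H(u)-F(u)|\le\varepsilon$, hence $H(u)\in[p-\varepsilon,p+\varepsilon]\subset(0,1)$; here the assumption $0<p\pm\varepsilon<1$ is exactly what guarantees $H(u)\in(0,1)$ and therefore $u\in(a,b)$, so that $H^{-1}(H(u))$ is meaningful. Since $H$ is strictly increasing on $(a,b)$ (because $h>0$ there by Assumption \ref{A2}), applying $H^{-1}$ yields $u\in[H^{-1}(p-\varepsilon),H^{-1}(p+\varepsilon)]$. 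The same interval trivially contains $H^{-1}(p)$, so both quantiles lie in it and
\[
|F^{-1}(p)-H^{-1}(p)|\leq H^{-1}(p+\varepsilon)-H^{-1}(p-\varepsilon).
\]

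Next I would measure this interval length by the mean value theorem for $H^{-1}$, using $(H^{-1})'(q)=1/h(H^{-1}(q))$, which is legitimate since $[H^{-1}(p-\varepsilon),H^{-1}(p+\varepsilon)]\subset(a,b)$ keeps $h$ strictly positive. This gives $H^{-1}(p+\varepsilon)-H^{-1}(p-\varepsilon)=2\varepsilon/h(w)$ for some $w=H^{-1}(\theta)$ with $\theta\in(p-\varepsilon,p+\varepsilon)$, producing both the factor $2$ and the $1/h$ term. Finally, setting $c:=H^{-1}(p)-H_{\text{Num}}^{-1}(p)$, which satisfies $|c|\le\varepsilon$ by hypothesis, the point $y+c=H^{-1}(p)\in(a,b)$ has $h(y+c)>0$ as claimed; it then remains to replace $h(w)$ by $h(y+c)$. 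As $\varepsilon\to0$ the evaluation point $w$ converges to $H^{-1}(p)=y+c$, so continuity of $h$ at $H^{-1}(p)$ together with $h(H^{-1}(p))>0$ gives $1/h(w)=1/h(y+c)+o(1)$, whence $2\varepsilon/h(w)=2\varepsilon/h(y+c)+o(\varepsilon)$, and collecting the three contributions yields (\ref{eq:H-1}).

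The delicate step is this last substitution rather than the geometry: the mean value point $w$ can sit far from $y$ precisely in the tails, where $h$ is tiny and $H^{-1}$ is steep, so one cannot simply identify $w$ with a point within $\varepsilon$ of $y$. The clean way around this is to fix $y+c=H^{-1}(p)$ (which automatically gives $h(y+c)>0$) and to push the discrepancy $1/h(w)-1/h(y+c)$ into the $o(\varepsilon)$ term via continuity of $h$, which is the reason the estimate is asymptotic in $\varepsilon$ rather than exact.
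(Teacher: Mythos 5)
Your proof follows the same skeleton as the paper's: the triangle inequality that splits off the $+\varepsilon$ term, the sandwich $H^{-1}(p-\varepsilon)\leq F^{-1}(p)\leq H^{-1}(p+\varepsilon)$ (you obtain it by evaluating $H$ at $F^{-1}(p)$, the paper by evaluating $F$ at $H^{-1}(p\pm\varepsilon)$ --- a mirror image of the same argument, equally valid), and the identification $y+c=H^{-1}(p)$, which gives $h(y+c)>0$. The one substantive difference is the final step, and there your justification has a gap. You measure $H^{-1}(p+\varepsilon)-H^{-1}(p-\varepsilon)$ by the mean value theorem, obtaining $2\varepsilon/h(w)$ with $w=H^{-1}(\theta)$, $\theta\in(p-\varepsilon,p+\varepsilon)$, and then pass from $h(w)$ to $h(y+c)$ ``by continuity of $h$ at $H^{-1}(p)$''. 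But the theorem only assumes that $H$ is differentiable; $h=H^{\prime}$ is a derivative and need not be continuous (derivatives are Darboux functions, and one can construct $H$ with $h>0$ on $(a,b)$ yet $h$ discontinuous at $H^{-1}(p)$, e.g.\ via oscillations of the type $x^{2}\sin(1/x)$ added to a strictly increasing function). So the limit $1/h(w)\to1/h(y+c)$ as $\varepsilon\to0$ is not justified under the stated hypotheses, even though $w\to H^{-1}(p)$.

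The repair is exactly the paper's route: instead of the mean value theorem, use the inverse function rule as a first-order expansion at the single point $p$,
\[
H^{-1}(p\pm\varepsilon)=H^{-1}(p)\pm\frac{\varepsilon}{h\big(H^{-1}(p)\big)}+o(\varepsilon),
\]
which requires only that $H^{-1}$ be differentiable at $p$ --- and this follows from differentiability of $H$ at $H^{-1}(p)$, the fact $h(H^{-1}(p))>0$, and strict monotonicity of $H$ on $(a,b)$; no continuity of $h$ is needed. Subtracting the two expansions gives $H^{-1}(p+\varepsilon)-H^{-1}(p-\varepsilon)=\frac{2\varepsilon}{h(y+c)}+o(\varepsilon)$ directly. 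Note that the identity you were after is in fact true (it is precisely the statement that $H^{-1}$ is differentiable at $p$ with derivative $1/h(H^{-1}(p))$), so your proof is not wrong in its conclusion; it is the detour through the mean value point $w$, whose location you cannot control, that forces you to invoke an unavailable continuity assumption. With that one substitution your argument is complete and coincides with the paper's.
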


\begin{proof}
We use
\begin{equation}
|F^{-1}(p)-H_{\text{Num}}^{-1}(p)|\leq|F^{-1}(p)-H^{-1}(p)|+|H^{-1}(p)-H_{\text{Num}}^{-1}(p)|.\label{eq:HNum}
\end{equation}
The second term at the right-hand side of Inequality (\ref{eq:HNum})
is less or equal than $\varepsilon$ by assumption, which also implies
that there is a $c\in[-\varepsilon,\varepsilon]$ with $H^{-1}(p)=y+c$.
By Assumption \ref{A1}, it holds that $h(y+c)>0$. We analyze the
first term at the right-hand side of Inequality (\ref{eq:HNum}):
We have for $\tilde{y}:=H^{-1}(p+\varepsilon)$ that $F(\tilde{y})\geq H(\tilde{y})-\varepsilon=p$.
So, $H^{-1}(p+\varepsilon)\geq F^{-1}(p)$. Similarly, $F^{-1}(p)\geq H^{-1}(p-\varepsilon)$
holds. By the monotonicity of $H^{-1}$ it holds that $H^{-1}(p+\varepsilon)\geq H^{-1}(p)\geq H^{-1}(p-\varepsilon)$.
Therefore,
\[
|H^{-1}(p)-F^{-1}(p)|\leq H^{-1}(p+\varepsilon)-H^{-1}(p-\varepsilon).
\]
Next, we use the inverse function rule to conclude that
\[
H^{-1}(p\pm\varepsilon)=H^{-1}(p)\pm\frac{\varepsilon}{h\big(H^{-1}(p)\big)}+o(\varepsilon).
\]
Hence, $H^{-1}(p+\varepsilon)-H^{-1}(p-\varepsilon)=\frac{2\varepsilon}{h(y+c)}+o(\varepsilon)$,
which completes the proof.
\end{proof}
In order to apply Theorem \ref{thm:QF}, we have to compute $H$ such
that the absolute difference between $H$ and $F$ is small. In this
research we use the COS method to obtain $H$ from the CF $\varphi$,
however, there are other Fourier inversion techniques to obtain $H$
from $\varphi$, e.g., the Gil-Pelaez formula.

To apply the COS method we assume that $f$ has semi-heavy or lighter
tails, which implies that the COS method converges exponentially,
see \citet{junike2023handle}. Formally, we make the following assumption: 

\begin{assumption}\label{A3} 	
For constants $C_{1},C_{2},L_{0}>0$ we assume that $|f(x)|\leq C_{1}\exp(-C_{2}|x|),\quad|x|\geq L_{0}.$
\end{assumption}
\begin{example}
The Generalized Hyperbolic distribution has semi-heavy tails and support
on $(-\infty,\infty)$. It contains many other distributions as special
cases, e.g. the NIG, the Variance Gamma and the Hyperbolic distribution,
see \citet[Sec. 5.3.11]{schoutens2003levy}. The TS distribution has
semi-heavy tails and support on $(0,\infty)$. It includes the normal-inverse
Gaussian%
{} and the Gamma distribution as special cases. Densities with heavy
tails that do not meet Assumption \ref{A3} are, for example, the
Pareto and stable distributions. 
\end{example}

Next, we introduce the COS method, see \citet{fang2009novel}. Let
$(\alpha,\beta)\subset\overline{\mathbb{R}}$ such that Assumption
\ref{A1} is satisfied. Let $(a,b)\subset\mathbb{R}$ be a large but
finite interval with $(a,b)\subset(\alpha,\beta)$. Let $N\in\mathbb{N}$
be large enough. Since only $\varphi$ is given, we approximate $f$
as follows: first we truncate $f$, then the truncated density is
approximated by a classical Fourier-Cosine series, i.e.,
\[
f(x)\approx f(x)1_{(a,b)}(x)\approx\frac{a_{0}}{2}+\sum_{k=1}^{N}a_{k}\cos\left(k\pi\frac{x-a}{b-a}\right)\approx\frac{c_{0}}{2}+\sum_{k=1}^{N}c_{k}\cos\left(k\pi\frac{x-a}{b-a}\right)=:h_{\text{COS}}(x).
\]
The coefficients $a_{k}$ are defined and approximated as follows:
\begin{align*}
a_{k} & :=\frac{2}{b-a}\int_{a}^{b}f(x)\cos\left(k\pi\frac{x-a}{b-a}\right)dx\\
 & \approx\frac{2}{b-a}\int_{\alpha}^{\beta}f(x)\cos\left(k\pi\frac{x-a}{b-a}\right)dx\\
 & =\frac{2}{b-a}\Re\left\{ \varphi\left(\frac{k\pi}{b-a}\right)\exp\left(-i\frac{ka\pi}{b-a}\right)\right\} =:c_{k}.
\end{align*}
For two real numbers $x$ and $y$ we denote by $x\land y:=\min(x,y)$
and by $x\lor y:=\max(x,y)$. Given $h_{\text{COS}}$, we approximate
$F(y)=\int_{-\infty}^{y}f(x)dx$ by zero for $y\leq a$ and for $y>a$
by
\begin{align*}
F(y) & \approx\int_{a}^{y\land b}h_{\text{COS}}(x)dx=\frac{c_{0}}{2}(y\land b-a)+\sum_{k=1}^{N}c_{k}\frac{b-a}{k\pi}\sin\left(k\pi\frac{y\land b-a}{b-a}\right)=:H_{\text{COS}}(y).
\end{align*}

Since $f(x)>0$, observe that $h_{\text{COS}}(x)>0$ for $x\in(a,b)$
and $N$ large enough if $f$ is continuous and piecewise continuously
differentiable on $(\alpha,\beta)$ since then the Fourier-Cosine
series converges pointwise. Further, $\int_{-\infty}^{\infty}h_{\text{COS}}(x)dx=\frac{c_{0}}{2}(b-a)=1$.
So, $h_{\text{COS}}$ is a density and $H_{\text{COS}}$ is a CDF,
which is bijective as a map from $(a,b)$ to $(0,1)$. In particular,
$h_{\text{COS}}$ satisfies Assumption \ref{A2}. The next Theorem
gives conditions on $(a,b)$ such that $\sup_{y\in\mathbb{R}}|H_{\text{COS}}(y)-F(y)|\leq\varepsilon$,
which is an essential assumption in Theorem \ref{thm:QF}.
\begin{thm}
\label{thm:HH-1}Assume $f$ is a bounded density satisfying Assumption
\ref{A3}. Let $\varepsilon>0$ be small enough. Let $N\in\mathbb{N}$
be large enough. Let $n\in\mathbb{N}$ be even and set $\mu:=E[X]$
and
\[
\ell:=\sqrt[n]{\frac{2E[(X-\mu)^{n}]}{\varepsilon}},\quad a:=(\mu-\ell)\lor\alpha,\quad b:=(\mu+\ell)\land\beta.
\]
It then follows that $\sup_{y\in\mathbb{R}}|H_{\text{COS}}(y)-F(y)|\leq\varepsilon$. 
\end{thm}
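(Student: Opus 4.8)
The plan is to split the total error into a \emph{truncation} part, from replacing the true density by its periodized cosine reconstruction on the finite window $(a,b)$, and a \emph{series} part, from keeping only $N$ terms. Concretely, I would introduce the infinite-term reconstruction
\[
g(x):=\frac{c_0}{2}+\sum_{k=1}^{\infty}c_k\cos\!\Big(k\pi\tfrac{x-a}{b-a}\Big),\qquad x\in(a,b),
\]
together with $G(y):=\int_a^{y\land b}g(x)\,dx$ for $y>a$ and $G(y):=0$ for $y\le a$, and then estimate $\sup_{y}|F(y)-H_{\text{COS}}(y)|\le\sup_y|F(y)-G(y)|+\sup_y|G(y)-H_{\text{COS}}(y)|$. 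The second term is a pure series-truncation error that I will push below $\varepsilon/2$ by taking $N$ large; the first term I will push below $\varepsilon/2$ through the choice of $\ell$.

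For the truncation term, the key observation is that the full-support coefficients $c_k=\frac{2}{b-a}\int_{\alpha}^{\beta}f(x)\cos(k\pi\frac{x-a}{b-a})\,dx$ reconstruct on $(a,b)$ the even $2(b-a)$-periodic folding of $f$; by the Dirichlet-kernel identity this gives $g(x)=f(x)+\sum_{\text{folds}}f(\cdot)$, where every extra term evaluates $f$ at a point outside $(a,b)$. Since $f\ge0$, the aliasing is nonnegative, so $g\ge f$ on $(a,b)$ and $0\le\int_a^{y}(g-f)\,dx\le\int_a^b(g-f)\,dx=1-(F(b)-F(a))$; that is, the integrated aliasing is bounded by the total tail mass $P(X\notin(a,b))$. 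Combining this with $F(a)\le P(X\le\mu-\ell)$ and $1-F(b)\le P(X\ge\mu+\ell)$, and checking the three ranges $y\le a$, $a<y<b$, $y\ge b$ separately, yields $\sup_y|F(y)-G(y)|\le P(|X-\mu|\ge\ell)$.

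The moment bound then closes this step: since $n$ is even, $|X-\mu|^n=(X-\mu)^n$, so by Markov's inequality and the definition of $\ell$,
\[
P(|X-\mu|\ge\ell)=P\big(|X-\mu|^n\ge\ell^n\big)\le\frac{E[(X-\mu)^n]}{\ell^n}=\frac{\varepsilon}{2},
\]
which needs only $E[(X-\mu)^n]<\infty$, guaranteed by the semi-heavy-tail Assumption \ref{A3}, and that $\varepsilon$ is small enough for $\ell$ to be large (so that $a<b$, the window contains the bulk, and the tail estimate applies). This gives $\sup_y|F-G|\le\varepsilon/2$, and the factor $2$ in $\ell$ is exactly what reserves the other $\varepsilon/2$ for the series. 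For that last piece I would write $G(y)-H_{\text{COS}}(y)=\sum_{k>N}c_k\frac{b-a}{k\pi}\sin(k\pi\frac{y\land b-a}{b-a})$, so $\sup_y|G-H_{\text{COS}}|\le\frac{b-a}{\pi}\sum_{k>N}\frac{|c_k|}{k}$, and invoke the exponential decay of the cosine coefficients under Assumption \ref{A3} together with the smoothness of $f$ (the COS convergence analysis of \citet{junike2023handle}) to make this tail $\le\varepsilon/2$ for all $N$ large enough.

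The main obstacle is the first term, and specifically the fact that $h_{\text{COS}}$ is built from the full-range coefficients $c_k$ rather than the genuine Fourier coefficients $a_k$ of $f\,1_{(a,b)}$: a naive comparison $a_k\leftrightarrow c_k$ gives $|a_k-c_k|\le\frac{\varepsilon}{b-a}$ and hence a CDF bound $\frac{\varepsilon}{\pi}\sum_{k\le N}\frac1k\sim\frac{\varepsilon}{\pi}\ln N$ that \emph{blows up} with $N$ and would defeat the ``take $N$ large'' step. The clean route is therefore to bypass $a_k$ entirely and argue through the folding representation of $g$, where nonnegativity of the aliasing converts the awkward coefficient error into a controlled mass term bounded by $P(X\notin(a,b))$. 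Justifying this folding identity (interchanging summation and integration, legitimate once the coefficients are summable) and the termwise integration defining $G$ are the only delicate points; the remainder is the routine three-region CDF estimate and Markov's inequality.
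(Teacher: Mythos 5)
Your proposal is correct in substance and, as far as one can compare, follows the same route that the paper compresses into a single citation: the paper's entire proof is that the bound ``follows as in \citet[Corollary 9]{junike2022precise} using Markov's inequality and the fact that $f$ has semi-heavy tails'', so the ingredients it names (Markov's inequality for the tail mass, semi-heavy tails for finiteness of $E[(X-\mu)^{n}]$) are exactly the ones you deploy. What you supply explicitly, and the paper outsources, is the mechanism that makes the coefficient mismatch harmless: the observation that the $c_{k}$ are the genuine cosine coefficients of the nonnegative periodic folding of $f$, so the aliasing enters the CDF only through its integral, which equals the tail mass $P(X\notin(a,b))$ and is again controlled by Markov and the choice of $\ell$. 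Your diagnosis that the naive comparison $|a_{k}-c_{k}|\le\varepsilon/(b-a)$ leads to a $\tfrac{\varepsilon}{\pi}\ln N$ blow-up, and must be bypassed, is the genuinely delicate point of any self-contained proof, and your folding argument, the three-region estimate, and the Markov step with even $n$ are all sound.

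One step needs repair. For the series tail $\sup_{y}|G-H_{\text{COS}}|\le\frac{b-a}{\pi}\sum_{k>N}\frac{|c_{k}|}{k}$ you invoke exponential decay of the cosine coefficients via smoothness of $f$, but the theorem assumes only that $f$ is a bounded density satisfying Assumption \ref{A3}; no smoothness is available there, and without it the $c_{k}$ need not decay exponentially (the paper itself invokes smoothness only in Remark \ref{rem:N}, where an explicit formula for $N$ is wanted, not in Theorem \ref{thm:HH-1}). The fix is cheap and stays inside your own framework: boundedness of $f$ together with the exponential tail bound makes the folded density $g$ bounded, hence $g\in L^{2}(a,b)$, so $\sum_{k}c_{k}^{2}<\infty$ by Parseval; then Cauchy--Schwarz gives
\begin{equation*}
\sum_{k>N}\frac{|c_{k}|}{k}\le\Big(\sum_{k>N}c_{k}^{2}\Big)^{1/2}\Big(\sum_{k>N}k^{-2}\Big)^{1/2}\longrightarrow0\quad\text{as }N\to\infty,
\end{equation*}
which is all that ``$N$ large enough'' requires. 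With that substitution your argument is complete and self-contained, which is more than the paper's one-line proof provides.
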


\begin{proof}
The inequality $|H_{\text{COS}}(y)-F(y)|<\varepsilon$ for all $y\in\mathbb{R}$
follows as in \citet[Corollary 9]{junike2022precise} using Markov's
inequality and the fact that $f$ has semi-heavy tails. 
\end{proof}
In the following Remarks, we provide more details on how to implement
Theorems \ref{thm:QF} and \ref{thm:HH-1}.
\begin{rem}
In practical applications, we suggest replacing the right-hand side
in Inequality (\ref{eq:H-1}) by $\frac{2\varepsilon}{\min\{h(y\pm\varepsilon)\}}+\varepsilon$.
Observe that $E[(X-\mu)^{n}]=\frac{1}{i^{n}}\frac{\partial}{\partial u^{n}}\left.\psi(u)\right|_{u=0}$,
where $\psi$ is the CF of $X-\mu$, i.e., $\psi(u)=\varphi(u)e^{-iu\mu}$.
So, we need only obtain the $n^{th}$-derivative of $\psi$ to compute
\emph{$E[(X-\mu)^{n}]$}. \citet{junike2022precise} suggest choosing
$n\in\{4,6,8\}$. In our experiments, we set $n=8$.
\end{rem}

\begin{rem}
We suggest a root-finding algorithm, e.g., Newton's method, the secant
method, the fixed point iteration method or the bisection method,
to invert $H$. The bisection method has the advantage of providing
a full error control, i.e., we are able to compute $H_{\text{Num}}^{-1}$
such that $|H^{-1}(p)-H_{\text{Num}}^{-1}(p)|\leq\varepsilon$ holds.
The method repeatedly bisects the interval $(a,b)$ by selecting the
subinterval in which the function $H(\cdot)-p$ changes its sign until
the bisected interval has a length less than $\varepsilon$.
\end{rem}

\begin{rem}
Let $\delta>0$ and $p\in(0,1)$ be given. Suppose we need $|H_{\text{Num}}^{-1}(p)-F^{-1}(p)|\leq\delta$.
How should we choose the error tolerance $\varepsilon$ for the CDF?
This is a tricky question: the choice of $\varepsilon$ affects the
truncation range $(a,b)$ and thus $h$ and $H_{\text{Num}}^{-1}$.
However, the right-hand side of Inequality (\ref{eq:H-1}) also depends
on $h$. We suggest starting with a reasonable error tolerance $\varepsilon$.
Then, compute $h$ and $H_{\text{Num}}^{-1}$ and check if the inequality
$\frac{2\varepsilon}{h(y)}+2\varepsilon\leq\delta$ is satisfied.
If it is not satisfied, reduce $\varepsilon$ successively until the
inequality $\frac{2\varepsilon}{h(y)}+2\varepsilon\leq\delta$ holds.
Then we can be sure that $|H_{\text{Num}}^{-1}(p)-F^{-1}(p)|\leq\delta$.
\end{rem}

\begin{rem}
\label{rem:N}Let $L:=\frac{b-a}{2}$ and $s\in\mathbb{N}$ be odd.
If $f$ is $s+1$ times differentiable with bounded derivatives and
the derivatives also have semi-heavy tails, then the number of terms
in Theorem \ref{thm:HH-1} can be determined by 
\[
N\geq\left(\frac{1}{\pi}\int_{0}^{\infty}u{}^{s+1}|\varphi(u)|du\right)^{\frac{1}{s}}\times\left(\frac{2^{s+\frac{5}{2}}L^{s+2}}{s\pi^{s+1}}\frac{12}{\varepsilon}\right)^{\frac{1}{s}},
\]
see \citet[Eq. (3.8)]{junike2023handle}. The last integral can be
solved numerically using standard techniques, e.g., Gauss--Laguerre
quadrature, and in some cases (e.g., normal and NIG distributions)
it is given explicitly. \citet{junike2023handle} suggests $s\in\{19,...,39\}$.
In our experiments, we set $s=39$. There is also an implicit way
to find $N$ without additional smoothness assumptions on $f$, but
it requires that $\int_{-\infty}^{\infty}|\varphi(u)|^{2}du$ is given
exactly, see \citet[Corollary 3.12]{junike2024multidimensional}. 
\end{rem}

\section{\protect\label{sec:Numerical-experiments}Numerical experiments}

In our numerical experiments, we consider three random variables,
$X_{1}$, $X_{2}$ and $X_{3}$, and compute their quantiles via Theorems
\ref{thm:QF} and \ref{thm:HH-1}. By $X_{1}$ we denote the standard
normal random variable, abbreviated as N(0,1), which has mean $0$,
variance $1$, skewness $0$ and kurtosis $3$.

$X_{2}$ is a TS distributed random variable with parameters $c>0$,
$d\geq0$ and $\kappa\in(0,1)$, which has the characteristic function
$u\mapsto\exp\big(cd-c\big(d^{\frac{1}{\kappa}}-2iu\big)^{\kappa}\big)$.
We set $c=d=1$ and $\kappa=\frac{3}{4}$. The random variable $X_{2}$
has support on $(0,\infty)$, mean $1.5$, variance $0.75$, skewness
$2.89$ and kurtosis $18$. 

$X_{3}$ follows a NIG distribution with parameters $\gamma>0$, $\theta\in(-\gamma,\gamma)$
and $\nu>0$, which is defined as a normal variance-mean mixture where
the mixing density is the inverse Gaussian distribution. The random
variable $X_{3}$ has characteristic function $u\mapsto\exp\big(-\nu\big(\sqrt{\gamma^{2}-(\theta+iu){}^{2}}-\sqrt{\gamma^{2}-\theta^{2}}\big)\big)$.
We set $\nu=\gamma=1$ and $\theta=0$. Then $X_{3}$ has support
on $\mathbb{R}$ and has mean $0$, variance $1$, skewness $0$ and
kurtosis $6$, i.e., much heavier tails than the normal distribution.
\begin{rem}
On the computation of reference values: We compute reference values
for Table \ref{tab:numerics} for $F$ by the COS method using $(a,b)$
as in Theorem \ref{thm:HH-1} with $\varepsilon=10^{-9}$. We set
$N=10^{7}$. We confirm the reference values by the the Gil-Palaez
formula, see (\ref{eq:Gil}). The values for the CDF using the COS
method and the Gil-Palaez formula agree up to $12$ digits. We then
apply the bisection method for numerical inversion with error tolerance
$\varepsilon=10^{-9}$ to obtain a reference value for $F^{-1}$.
Theorem \ref{thm:QF} ensures that the reference value for $F^{-1}$
and the true QF coincide up to 6 digits. In the case of the normal
distribution, we double check that the reference values for $F$ and
$F^{-1}$ agree with the known closed form solutions up to 9 digits.
\end{rem}

Table \ref{tab:numerics} shows the parameters $a$, $b$ and $N$
of the COS method for different error tolerances of $\varepsilon$
for the three distributions. For different probabilities $p\in(0,1)$,
we compute $y:=H_{\text{Num}}^{-1}(p)$, $F(y)$, $|H(y)-F(y)|$,
$|H_{\text{Num}}^{-1}(p)-F^{-1}(p)|$ and $h(y)$. We observe that
the right-hand side (RHS) of Inequality (\ref{eq:H-1}) is always
satisfied and to some extent overestimates the true error on the QF,
since the inequalities in the proofs of Theorems \ref{thm:QF} and
\ref{thm:HH-1} are not sharp for the three distributions. 

\begin{table}[H]
\begin{centering}
\begin{tabular}{|>{\centering}p{0.8cm}>{\centering}p{0.7cm}>{\centering}p{0.7cm}>{\centering}p{0.4cm}>{\centering}p{0.4cm}>{\centering}p{1.2cm}>{\centering}p{1cm}>{\centering}p{1.25cm}>{\centering}p{1.3cm}>{\centering}p{1.5cm}c|}
\hline 
\multicolumn{1}{|>{\centering}p{0.8cm}|}{$F$} & \multicolumn{1}{>{\centering}p{0.7cm}|}{$\varepsilon$} & \multicolumn{1}{>{\centering}p{0.7cm}|}{$b-a$} & \multicolumn{1}{>{\centering}p{0.4cm}|}{$N$} & \multicolumn{1}{>{\centering}p{0.4cm}|}{$p$} & \multicolumn{1}{>{\centering}p{1.2cm}|}{$y:=$

$H_{\text{Num}}^{-1}(p)$} & \multicolumn{1}{>{\centering}p{1cm}|}{$F(y)$} & \multicolumn{1}{>{\centering}p{1.25cm}|}{$|H(y)$

$-F(y)|$} & \multicolumn{1}{>{\centering}p{1.3cm}|}{$|H_{\text{Num}}^{-1}(p)$

$-F^{-1}(p)|$} & \multicolumn{1}{>{\centering}p{1.5cm}|}{$h(y-\varepsilon)\land h(y+\varepsilon)$} & \begin{cellvarwidth}[t]
\centering
RHS

of (\ref{eq:H-1})
\end{cellvarwidth}\tabularnewline
\hline 
\hline 
TS & 0.005 & 10.2 & 50 & 0.01 & 0.01492 & 0.01475 & 0.00013 & 0.00481 & 0.980 & 0.02\tabularnewline
TS & 0.005 & 10.2 & 50 & 0.1 & 0.10444 & 0.10370 & 0.00055 & 0.00370 & 1.000 & 0.02\tabularnewline
TS & 0.005 & 10.2 & 50 & 0.25 & 0.24370 & 0.24619 & 0.00039 & 0.00362 & 1.056 & 0.01\tabularnewline
TS & 0.005 & 10.2 & 50 & 0.75 & 0.70127 & 0.75166 & 0.00077 & 0.00172 & 0.940 & 0.02\tabularnewline
TS & 0.005 & 10.2 & 50 & 0.9 & 0.98974 & 0.89978 & 0.00084 & 0.00156 & 0.142 & 0.08\tabularnewline
TS & 0.005 & 10.2 & 50 & 0.99 & 3.36711 & 0.98996 & 0.00007 & 0.00504 & 0.009 & 1.14\tabularnewline
\hline 
N(0,1) & 0.005 & 7.6 & 12 & 0.75 & 0.67617 & 0.75053 & 0.00000 & 0.00168 & 0.316 & 0.04\tabularnewline
N(0,1) & 0.005 & 7.6 & 12 & 0.9 & 1.28214 & 0.90010 & 0.00000 & 0.00059 & 0.174 & 0.06\tabularnewline
N(0,1) & 0.005 & 7.6 & 12 & 0.99 & 2.32411 & 0.98994 & 0.00000 & 0.00224 & 0.026 & 0.38\tabularnewline
\hline 
NIG & 0.005 & 11.9 & 79 & 0.75 & 0.53675 & 0.74896 & 0.00000 & 0.00284 & 0.365 & 0.03\tabularnewline
NIG & 0.005 & 11.9 & 79 & 0.9 & 1.14023 & 0.90019 & 0.00000 & 0.00124 & 0.153 & 0.07\tabularnewline
NIG & 0.005 & 11.9 & 79 & 0.99 & 2.70116 & 0.98999 & 0.00000 & 0.00073 & 0.014 & 0.73\tabularnewline
NIG & 0.0005 & 15.8 & 114 & 0.99 & 2.70203 & 0.99000 & 0.00000 & 0.00014 & 0.014 & 0.07\tabularnewline
\hline 
\end{tabular}
\par\end{centering}
\caption{\protect\label{tab:numerics}QF of the distributions TS, N(0,1) and
NIG. The parameters $a$, $b$ and $N$ of the COS method for NIG
and N(0,1) are obtained as described in Section \ref{sec:Theory}.
In the case of the TS distribution, we set $N=50$.}
\end{table}

We provide an example how Theorem \ref{thm:HH-1} can be used to choose
the error tolerance $\varepsilon$ for the NIG CDF to approximate
the NIG QF arbitrarily closely. (Without calculating any reference
values). Suppose $p=0.99$ and $\delta=0.1$ is the error tolerance
for the QF. First, set $\varepsilon_{1}:=0.005$ and obtain $a(\varepsilon_{1})=-5.9$,
$b(\varepsilon_{1})=5.9$ and $N(\varepsilon_{1})=79$ as described
in Section \ref{sec:Theory}. From these parameters, compute $H_{1}$
and $h_{1}$. By numerical inversion using the bisection method, we
get $y_{1}:=H_{\text{Num},1}^{-1}(p)=2.7$. We see that the RHS of
Inequality (\ref{eq:H-1}) is (approximately) equal to $\frac{2\varepsilon_{1}}{\min\{h_{1}(y_{1}\pm\varepsilon_{1})\}}+\varepsilon_{1}=0.73>\delta$.
So, $\varepsilon_{1}$ is too large. In the next step, we set $\varepsilon_{2}:=0.0005\approx\frac{\delta}{\frac{2}{\min\{h_{1}(y_{1}\pm\varepsilon_{1})\}}+1}$
and obtain $a(\varepsilon_{2})=-7.9$, $b(\varepsilon_{2})=7.9$ and
$N(\varepsilon_{2})=114$. We observe in Table \ref{tab:numerics}
that the RHS of Inequality (\ref{eq:H-1}) is now satisfied, and Theorem
\ref{thm:HH-1} ensures that $|H_{\text{Num},2}^{-1}(p)-F^{-1}(p)|<\delta$. 

Finally, a word about computational time. Note that the formulas for
$a$, $b$ and $N$ in Section \ref{sec:Theory} do not depend on
$y$ or $p$ and need to be computed only once. The COS method must
evaluate the CF $N$ times, which is extremely fast since $\varphi$
is given in closed form. For example, using the R software and vectorized
code on an Intel i7-10750H CPU computing the CDF of the TS distribution
with $N=50$ takes on average 11 microseconds.

\bibliographystyle{plainnat}
\bibliography{biblio}

\begin{thebibliography}{14}
\providecommand{\natexlab}[1]{#1}
\providecommand{\url}[1]{\texttt{#1}}
\expandafter\ifx\csname urlstyle\endcsname\relax
  \providecommand{\doi}[1]{doi: #1}\else
  \providecommand{\doi}{doi: \begingroup \urlstyle{rm}\Url}\fi

\bibitem[Abate and Whitt(1992)]{abate1992fourier}
J.~Abate and W.~Whitt.
\newblock {The Fourier-series method for inverting transforms of probability
  distributions}.
\newblock \emph{Queueing systems}, 10:\penalty0 5--87, 1992.

\bibitem[Barndorff-Nielsen(1997)]{barndorff1997normal}
O.~E. Barndorff-Nielsen.
\newblock {Normal inverse Gaussian distributions and stochastic volatility
  modelling}.
\newblock \emph{Scandinavian Journal of Statistics}, 24\penalty0 (1):\penalty0
  1--13, 1997.

\bibitem[Barndorff-Nielsen(1978)]{barndorff1978hyperbolic}
Ole Barndorff-Nielsen.
\newblock Hyperbolic distributions and distributions on hyperbolae.
\newblock \emph{Scandinavian Journal of Statistics}, pages 151--157, 1978.

\bibitem[Carr and Madan(1999)]{carr1999option}
P.~Carr and D.~Madan.
\newblock {Option valuation using the fast Fourier transform}.
\newblock \emph{Journal of Computational Finance}, 2\penalty0 (4):\penalty0
  61--73, 1999.

\bibitem[Fang and Oosterlee(2009)]{fang2009novel}
F.~Fang and C.~W. Oosterlee.
\newblock {A novel pricing method for European options based on Fourier-cosine
  series expansions}.
\newblock \emph{SIAM Journal on Scientific Computing}, 31\penalty0
  (2):\penalty0 826--848, 2009.

\bibitem[Gilchrist(2000)]{gilchrist2000statistical}
Warren Gilchrist.
\newblock \emph{{Statistical modelling with quantile functions}}.
\newblock Chapman and Hall/CRC, 2000.

\bibitem[Hougaard(1986)]{hougaard1986survival}
Philip Hougaard.
\newblock Survival models for heterogeneous populations derived from stable
  distributions.
\newblock \emph{Biometrika}, 73\penalty0 (2):\penalty0 387--396, 1986.

\bibitem[Hughett(1998)]{hughett1998error}
P.~Hughett.
\newblock {Error bounds for numerical inversion of a probability characteristic
  function}.
\newblock \emph{SIAM Journal on Numerical Analysis}, 35\penalty0 (4):\penalty0
  1368--1392, 1998.

\bibitem[Junike(2024)]{junike2023handle}
G.~Junike.
\newblock {On the number of terms in the COS method for European option
  pricing}.
\newblock \emph{Numerische Mathematik}, 156\penalty0 (2):\penalty0 533--564,
  2024.

\bibitem[Junike and Pankrashkin(2022)]{junike2022precise}
G.~Junike and K.~Pankrashkin.
\newblock {Precise option pricing by the COS method--How to choose the
  truncation range}.
\newblock \emph{Applied Mathematics and Computation}, 421:\penalty0 126935,
  2022.

\bibitem[Junike and Stier(2024)]{junike2024multidimensional}
Gero Junike and Hauke Stier.
\newblock {The multidimensional COS method for option pricing}.
\newblock \emph{arXiv preprint arXiv:2307.12843}, 2024.

\bibitem[Schoutens(2003)]{schoutens2003levy}
W.~Schoutens.
\newblock \emph{{L{\'e}vy Processes in Finance: Pricing Financial
  Derivatives}}.
\newblock Wiley Online Library, 2003.

\bibitem[Shaw and McCabe(2009)]{shaw2009monte}
William~T Shaw and Jonathan McCabe.
\newblock {Monte Carlo sampling given a characteristic function: quantile
  mechanics in momentum space}.
\newblock \emph{arXiv preprint arXiv:0903.1592}, 2009.

\bibitem[Wang(2017)]{wang2017pricing}
Chunfa Wang.
\newblock {Pricing European Options by Stable Fourier-Cosine Series
  Expansions}.
\newblock \emph{arXiv preprint arXiv:1701.00886}, 2017.

\end{thebibliography}

\end{document}